\theoremstyle{plain}
\newtheorem{theorem}{\bf Theorem}[section]
\newtheorem{lem}[theorem]{\bf Lemma}
\newtheorem{defi}[theorem]{\bf Definition}
\newtheorem{rem}[theorem]{\bf Remark}
\newtheorem{cor}[theorem]{\bf Corollary}
\newtheorem{ex}[theorem]{\bf Example}
\newtheorem{conv}[theorem]{\bf Convention}
\newtheorem*{conj*}{\bf Conjecture}
\def\rank{\ensuremath{\mathrm{rank}}}
\def\diam{\ensuremath{\mathrm{diam}}}
\newtheorem{notation}[theorem]{\bf Notation}
\begin{document}
\title{Upper bounds for the diameter of a direct power of non-abelian solvable groups}
\author{ Azizollah Azad}
\address {Department of Mathematics, Faculty of Sciences Arak University, Arak, Iran.}
\email{a-azad@araku.ac.ir}
\author{ Nasim Karimi}
\address{Instituto de Matem\'atica e estat\'istica, Universidade do Estado de  Rio de janeiro, Rio de Janeiro,Brasil}
\email{nasim@ime.uerj.br}
\date{\today}
\maketitle

%______________________________________________________________________________________________________________________________
\begin{abstract} Let $G$ be a finite group with a generating set $A$. By the (symmetric) diameter of $G$ with respect to $A$ we mean the maximum over $g \in G$ of the length of the shortest word in $(A\cup A^{-1}) A$ expressing $g$.By the (symmetric) diameter of $G$ we mean the maximum of (symmetric) diameter over all generating sets of $G$.  Let $n\geq1$, by $G^n$ we mean the $n$-th direct power of $G$. 
For $n\geq 1$ and finite non-abelian solvable group $G$ we find an upper bound, growing polynomially with respect to $n$, for the symmetric diameter and the diameter of $G^n$.  
 \end{abstract}
%_______________________________________________________________________________________________________________________________

\section{Introduction}

 Let $G$ be a finite group with a generating set $A$. By the (symmetric) diameter of $G$ with respect to $A$ we mean the maximum over $g \in G$ of the length of the shortest word in $(A\cup A^{-1})  A$ expressing $g$. Producing a bound for the (symmetric) diameter of a finite group is an important area of research in finite group theory. It is worth to mention to the  Babai's conjecture \cite{Babai&Seress:1988}: every non-abelian finite simple group $G$ has diameter $ \leq log^ k |G|$, where $k$ is an absolute constant; the conjecture is still open, despite great progress towards a solution both for alternating groups and for groups of Lie type. 
 Asymptotic estimate of the symmetric diameters of non-abelian simple groups with respect to various types of generating sets can be find in the survey \cite{Babai&Hetyei&Kantor&Lubotsky&Seress:1990}, which also lists related work, e.g., on the diameters of permutation groups. Furthermore, the area has progressed a lot over the last few years (see for instance \cite{Seress&Helfgott:2014, Seress&Helfgott&Bamberg:2014,Seress&Helfgott&Zuk:2015}). A more modest question is that of finding bounds for the diameter of direct products of finite groups, depending on the diameter of their factors. In this direction the papers \cite{Helfgott:2019,Dona:2022} in which produced upper bounds for the diameter of direct product of non-abelian simple groups are significant.    
 
 Let $n\geq1$, by $G^n$ we mean the $n$-th direct power of $G$. In \cite{Karimi:2017} have been appeared the following question: 
 How large can be the diameter of $G^n$ with respect to any generating set? There have been proved that if $G$ is abelian, then  the diameter of $G^n$ with respect to any generating set is  $O(n)$ and if $G$ is nilpotent, symmetric or dihedral, then there exist a generating set of minimum size which the diameter of $G^n$ with respect to this generating set is $O(n)$. In \cite{Dona:2022} have been proved that if $G$ is a non-abelian simple group, then the diameter of $G^n$ with respect to any generating set is $O(n^3)$.  
 
 Our main goal here is to present upper bounds for the diameter and the symmetric diameter of $G^n$, in which $G$ is a non-abelian solvable group. In fact, we prove that if $G$ is a non-abelian solvable group, then 
 \begin{equation*}
 \label{1}D^s(G^n) \leq \frac{1}{4} (4n )^{l} |G|
 \end{equation*}
 and
 \begin{equation*}
 \label{2}D(G^n) \leq n^{l} |G| \prod_{i=0}^{l-2}( |G^{(i)}| +1),
\end{equation*}
in which $l$ is the length of derived series of $G$.

\section{Preliminaries}

Throughout the paper all groups are considered to be finite . The subset $A\subseteq G$ is a generating set of $G$, if every element of $G$ can be expressed as a sequence of elements in $A$.\footnote{Usually $A\subseteq G$ is considered to be a generating set, if every element of $G$ can be expressed as a sequence of elements in $A\cup A^{-1}$. When $G$ is finite the definitions coincide.}  By the rank of $G$, denoted by $\rank(G)$,  we mean the cardinality of any of the smallest generating sets of $G$. 
 By the length of a non identity element $g\in G$, with respect to $A$, 
 we mean the minimum length of a sequence expressing $g$ in terms of elements in $A$.
 Denote this parameter by $l_A{g}$. Similarly we define the symmetric length of  a non identity element $g\in G$, with respect to $A$,  to be the minimum length of a sequence expressing $g$ in terms of elements in $A \cup A^{-1}$. Denote this parameter by $l^{s}_A{g}$.
 
 \begin{conv}
 We consider the (symmetric) length of identity to be zero, i.e. $l_A(1)=l^{s}_A(1)=0$ for every generating set $A$.  
 \end{conv}
 
 \begin{defi}
 Let $G$ be a finite group with generating set $A$. By the  \emph{diameter} of $G$ with respect to $A$ we mean  
  $$\diam(G,A):=\max \{l_A(g): g \in G \}.$$ 
 And by the symmetric \emph{diameter} of $G$ with respect to $A$ we mean
  $$\diam^{s}(G,A):=\max \{l^{s}_A(g): g \in G \}.$$

 \end{defi} 

 \begin{notation}
 Let $G$ be a finite group with a generating set $A$. Let $S$ be a subset of $G$. Denote by $Ml_A(S)$ the maximum of $l_A(s)$ over all $s$ in $S$ . Note that $Ml_A(G)=\diam(G,A).$
 \end{notation}
 
\begin{notation}
Denote by ($D^s(G)$) $D(G)$ the maximum (symmetric) diameter over all generating sets of $G$.
\end{notation} 

The following theorem has been proved by Wiegold in \cite{Wiegold:1975}.
\begin{theorem}\label{rank-solvable-group}\cite{Wiegold:1975}
Let G be a finite non-trivial solvable group, and set $$\rank(G)=\alpha,~\rank(G/G')=\beta.$$ Then $$\rank(G^n)=\beta n,$$ for $n \geq \alpha / \beta$. 
\end{theorem}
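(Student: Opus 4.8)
The plan is to establish the two inequalities $\rank(G^n)\ge \beta n$ and $\rank(G^n)\le \beta n$ separately, the first holding for every $n$ and the second being the substantial direction that requires the hypothesis $n\ge \alpha/\beta$.

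For the lower bound I would pass to the abelianization. Since the commutator subgroup of a direct product is the product of the commutator subgroups, $(G^n)'=(G')^n$, so $G^n/(G^n)'\cong (G/G')^n$. Any generating set of $G^n$ maps onto a generating set of this quotient, whence $\rank(G^n)\ge \rank((G/G')^n)$. For a finite abelian group $A$ one has $\rank(A)=\max_p \dim_{\mathbb F_p}(A/pA)$, and since $A^n/pA^n\cong (A/pA)^n$ this gives $\rank(A^n)=n\,\rank(A)$ exactly. Taking $A=G/G'$ yields $\rank(G^n)\ge n\beta$ for all $n\ge 1$.

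For the upper bound I would produce $\beta n$ elements generating $G^n$, i.e.\ a set of that size avoiding every maximal subgroup. Because $G$ is solvable, every chief factor of $G^n$ is an elementary abelian $p$-group, so the maximal subgroups split into two families: product-type subgroups $G\times\cdots\times M_i\times\cdots\times G$ with $M_i$ maximal in the $i$-th copy, and diagonal-type subgroups attached to a pair of coordinates together with an $\mathbb F_p$-isomorphism between a chief factor of each. Avoiding the product-type subgroups is exactly the demand that the chosen elements project onto a generating set of each copy of $G$; since $\rank(G)=\alpha$ and we are allotting $\beta n\ge \alpha$ generators, there is enough room to meet this in all coordinates at once. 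Avoiding the diagonal-type subgroups becomes, in the solvable setting, a system of linear-independence conditions over the finite endomorphism fields of the chief factors, and this count is dominated by the top factor $G/G'$, which already forces $\beta n$ and for which the allotted generators exactly suffice.

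I expect the main obstacle to be the diagonal-type maximal subgroups arising from chief factors lying inside $G'$: one must show that once $n\beta\ge\alpha$ the linear conditions imposed by these deeper factors can be satisfied simultaneously with the surjectivity conditions without spending more than $\beta n$ generators. The cleanest way to organise this is an induction on the derived length, peeling off the last nontrivial abelian term $N=G^{(d-1)}$, so that $G/N$ has shorter derived series, the same abelianization (hence the same $\beta$), and $\rank(G/N)\le\alpha$; the inductive hypothesis then supplies a size-$\beta n$ generating set of $(G/N)^n=G^n/N^n$. The crux is to lift this set to a generating set of $G^n$ by adjusting each coordinate by an element of $N^n$, which is a module-theoretic generation problem for $N^n$ as an $\mathbb F_p[G^n]$-module; it is precisely the slack $n\beta\ge\alpha$ that guarantees such a lift exists, and carrying out this step rigorously is where the real work lies.
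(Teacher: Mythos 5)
First, a remark on the comparison itself: the paper does not prove this statement at all. It is quoted from Wiegold \cite{Wiegold:1975} and used as a black box (in Theorem \ref{cor:solvable2}), so your proposal can only be judged on its own terms, and on those terms it has a genuine gap. The lower bound $\rank(G^n)\geq \beta n$ via the abelianization is correct and complete. But the upper bound --- the entire substance of the theorem --- is only sketched, and you concede as much (``carrying out this step rigorously is where the real work lies''). Worse, the inductive route you propose is circular as stated: lifting a $\beta n$-element generating tuple of $G^n/N^n$ to $G^n$ is exactly the situation of Gasch\"utz's lemma, but that lemma requires the number of generators to be at least $\rank(G^n)$, which is precisely the quantity you are trying to bound. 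Without a genuinely new argument for generating $N^n$ as a module --- and you give none --- the induction does not close.

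Second, your diagnosis of where the difficulty lies is off, and correcting it is what actually completes the proof. A maximal subgroup of a direct product that projects onto all factors corresponds (by Goursat's lemma plus maximality) to an isomorphism between \emph{simple quotients} of two of the factors, not between arbitrary chief factors. Since $G$ is solvable, every simple quotient of $G$ is cyclic of prime order, hence abelian, hence every diagonal-type maximal subgroup of $G^n$ contains $(G')^n$. So the ``deeper'' chief factors inside $G'$ that you single out as the main obstacle contribute no maximal subgroups at all, and one obtains a clean criterion: a subset generates $G^n$ if and only if (i) its projection to each coordinate generates $G$, and (ii) its image in $(G/G')^n$ generates $(G/G')^n$. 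The construction is then short: pick a generating tuple of $(G/G')^n$ of size $\beta n$ (possible, since that abelian group has rank exactly $\beta n$); in each coordinate $i$ the prescribed images generate $G/G'$, and since $\beta n \geq \alpha = \rank(G)$, Gasch\"utz's lemma applied to the quotient map $G \twoheadrightarrow G/G'$ --- per coordinate, where it is \emph{not} circular --- lets you choose representatives of those cosets whose $i$-th entries generate $G$; making these choices independently in all $n$ coordinates yields $\beta n$ elements satisfying (i) and (ii). This per-coordinate lifting is the step missing from your proposal.
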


\begin{lem}\label{rank-inequality} {\rm \cite{Wiegold:1974}}
Let $G$ be a finite group and $k$ be a positive integer. Then
\begin{equation}
k\, \rank (G/G') \leq \rank (G^k) \leq k\, \rank(G),
\end{equation}
where $G'$ is the commutator subgroup of $G$.
\end{lem}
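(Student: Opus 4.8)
The plan is to treat the two inequalities separately; the right-hand inequality is essentially formal, while the left-hand one reduces to a computation for finite abelian groups. For the upper bound $\rank(G^k)\leq k\,\rank(G)$, I would start from a generating set $\{g_1,\dots,g_r\}$ of $G$ of minimal size $r=\rank(G)$. For each coordinate $i\in\{1,\dots,k\}$ and each $j$, consider the element of $G^k$ having $g_j$ in the $i$-th slot and the identity elsewhere. These $kr$ elements generate $G^k$, since any tuple $(x_1,\dots,x_k)$ can be assembled one coordinate at a time. Hence $\rank(G^k)\le kr=k\,\rank(G)$.

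For the lower bound, the key observation is that commutators in a direct product are computed coordinatewise, so $(G^k)'=(G')^k$, and therefore the abelianization of $G^k$ is
$$G^k/(G^k)' \;\cong\; (G/G')^k.$$
Since the image of any generating set under a surjective homomorphism is again a generating set of the target, the rank cannot increase under passage to a quotient; applying this to the abelianization map yields $\rank(G^k)\ge \rank\bigl((G/G')^k\bigr)$.

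It therefore suffices to prove that $\rank(A^k)=k\,\rank(A)$ for every finite abelian group $A$, applied with $A=G/G'$. Here I would invoke the prime-by-prime description of the minimal number of generators of a finite abelian group $A$, namely $\rank(A)=\max_p \dim_{\mathbb{F}_p}(A/pA)$, the maximum running over primes $p$. Since $A^k/pA^k\cong (A/pA)^k$ as $\mathbb{F}_p$-vector spaces, its dimension is exactly $k$ times that of $A/pA$; taking the maximum over $p$ gives $\rank(A^k)=k\,\rank(A)$. Combining this with the previous paragraph produces $k\,\rank(G/G')=\rank\bigl((G/G')^k\bigr)\le \rank(G^k)$, which completes the lemma.

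I expect the main obstacle to be choosing the correct invariant for the abelian computation. The naive ``number of invariant factors'' description of rank does not behave simply under direct powers, whereas the $\mathbb{F}_p$-dimension description $\max_p\dim_{\mathbb{F}_p}(A/pA)$ does, because the functor $A\mapsto A/pA$ commutes with finite direct products. Securing this reduction cleanly, together with the passage to the abelianization $(G/G')^k$, is the crux of the lower bound; everything else is formal.
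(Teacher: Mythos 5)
Your proof is correct, but there is nothing in the paper to compare it against: Lemma \ref{rank-inequality} is stated as a quoted result of Wiegold \cite{Wiegold:1974} and no proof of it appears in the paper (it is only used, in Corollary \ref{property}). Judged on its own terms, both halves of your argument are sound. The upper bound $\rank(G^k)\le k\,\rank(G)$ via the $kr$ ``coordinate'' generators is exactly right. For the lower bound, your chain of reductions --- $(G^k)'=(G')^k$, hence $G^k/(G^k)'\cong (G/G')^k$; rank cannot increase under a surjective homomorphism; and $\rank(A^k)=k\,\rank(A)$ for finite abelian $A$ --- is valid, and the abelian computation using $\rank(A)=\max_p\dim_{\mathbb{F}_p}(A/pA)$ together with $A^k/pA^k\cong(A/pA)^k$ is correct; the rank formula you invoke is standard (it follows from the primary decomposition, the Burnside basis theorem for the $p$-primary parts, and the Chinese Remainder Theorem to combine generators across primes), and it does commute with finite direct powers exactly as you say. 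One quibble with your closing commentary: the invariant-factor description of rank in fact also behaves perfectly well under direct powers --- if $A$ has invariant factors $d_1\mid d_2\mid\cdots\mid d_s$, then $A^k$ has each $d_i$ repeated $k$ times, which is still a divisibility chain, so $\rank(A^k)=ks$ directly --- so that route is not actually an obstacle; your $\mathbb{F}_p$-dimension argument is simply the cleanest way to organize it.
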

%\begin{proof}
%See \cite{Wiegold:1974}. 
%\end{proof}
The following is an application of Lemma \ref{rank-inequality}.
%Corollary 1
\begin{cor}\label{property}
Let $G$ be a finite group. If $\rank(G)=\rank(G/G')$, then the following equality holds:
\begin{equation}\label{group-property}
\rank(G^n)=n\, \rank(G).
\end{equation}
  In particular, nilpotent groups satisfy this property.
\end{cor}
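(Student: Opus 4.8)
The plan is to obtain the equality as an immediate consequence of the two-sided bound in Lemma~\ref{rank-inequality}, and then to verify that the hypothesis is automatic for nilpotent groups using the Frattini subgroup. First I would apply Lemma~\ref{rank-inequality} with $k = n$, which gives
$$n\,\rank(G/G') \leq \rank(G^n) \leq n\,\rank(G).$$
Under the standing hypothesis $\rank(G) = \rank(G/G')$, the two outer quantities coincide and both equal $n\,\rank(G)$, so by squeezing we get $\rank(G^n) = n\,\rank(G)$, which is exactly \eqref{group-property}. This settles the main assertion with no further computation.

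It remains to justify the \emph{in particular} clause, namely that every nilpotent group $G$ satisfies $\rank(G) = \rank(G/G')$. One inequality is free: any generating set of $G$ projects onto a generating set of the quotient $G/G'$, so $\rank(G/G') \leq \rank(G)$ for arbitrary $G$. For the reverse inequality I would invoke two standard facts about the Frattini subgroup $\Phi(G)$: first, the non-generator property, that a subset of $G$ generates $G$ if and only if its image generates $G/\Phi(G)$; second, that in a nilpotent group every maximal subgroup is normal of prime index and hence contains $G'$, so that $G' \subseteq \Phi(G)$.

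Combining these, I would choose elements $s_1, \dots, s_r$ of $G$ with $r = \rank(G/G')$ whose images generate $G/G'$. Since $G' \subseteq \Phi(G)$, the natural surjection $G/G' \to G/\Phi(G)$ sends these images to a generating set of $G/\Phi(G)$; by the non-generator property, $s_1, \dots, s_r$ already generate $G$, whence $\rank(G) \leq r = \rank(G/G')$. Together with the free inequality this yields equality. The only genuinely non-formal ingredient is the containment $G' \subseteq \Phi(G)$ for nilpotent groups; everything else is the squeeze from Lemma~\ref{rank-inequality} and the elementary monotonicity of rank under quotients, so I expect no real obstacle beyond correctly citing the Frattini subgroup facts.
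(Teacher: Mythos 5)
Your proposal is correct and follows essentially the same route as the paper: the equality $\rank(G^n)=n\,\rank(G)$ is obtained by squeezing between the two sides of Lemma~\ref{rank-inequality}, and the nilpotent case is handled by lifting a minimum-size generating set of $G/G'$ and showing the lifts already generate $G$. The only difference is cosmetic: where the paper cites \cite[page 350]{Magnus&Karrass&Solitar:1976} for the fact that the elements of $G'$ may be discarded from the generating set, you prove that fact directly via $G' \subseteq \Phi(G)$ and the non-generator property of the Frattini subgroup, which makes your version self-contained.
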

\begin{proof}
The first statement is an immediate consequence of Lemma \ref{rank-inequality}. We prove the second statement. Note that, if $H$ is a homomorphic image of a finite group $G$, then $\rank(H) \leq \rank (G)$. Therefore, it is enough to show that $\rank(G) \leq \rank(G/G')$ for every finite nilpotent group $G$. Let $A= \{g_1G',g_2G',\ldots,g_kG' \}$ be a generating set of $G/G'$ of minimum size. Consider an arbitrary element $g \in G$. There exist some $i_1,i_2,\ldots, i_l \in \{1,2,\ldots,k\}$ such that $gG'= g_{i_1}g_{i_2}\ldots g_{i_l}G'$. This shows that $G$ is generated by $\{g_1,g_2,\ldots ,g_k\}$ together with some elements in $G'$. Because $G$ is nilpotent, it is generated by $\{g_1,g_2,\ldots ,g_k\}$ alone, see \cite[page 350]{Magnus&Karrass&Solitar:1976}. Therefore, $\rank(G) \leq \rank(G/G')$, which completes the proof.
\end{proof}

\section{Main results}
We start by presenting an upper bound for the symmetric diameter of a direct power of a non-abelian solvable group. Let $G$ be a non-abelian solvable group. Since solvable groups have a derived series of finite length our strategy is to find a relation between the diameter of a solvable group and the diameter of its derived subgroup. For this we need to establish a relation between the generating sets of the group and the generating sets of its subgroups. The following lemma, well known as \emph{Schreier Lemma},\index{Schreier!Lemma} gives a generating set for a subgroup of a group with respect to a generating set of the whole group. The generators of the subgroup are usually called Schreier generators\index{Schreier! generators}. Using Schreier generators we derive a relation between the diameter of a group and the diameter of its subgroup.

\begin{defi}
Let $H$ be a subgroup of a group $G$. By a right transversal for $G$ mod $H$, we mean a subset of $G$ which intersects every right coset $Hg$ in exactly one element.
\end{defi}

\begin{rem}
Let $G$ be a finite group with a generating set $X$ and a normal subgroup $H$. It is easy to see that the set $HX=\{Hx :~ x \in X\}$ is a generating set of $G/H$. Given an arbitrary element $Hg \in G/H$, $Hg$ can be written as a product of at most $D(G/H)$ elements in $XH$. Hence, there exist $x_1,x_2,\ldots,x_{D(G/H)} \in X$ such that $gH=x_1H x_2 H \ldots x_{D(G/H)} H=x_1x_2\ldots x_{D(G/H)} H$. It shows that there always exists a right transversal $T$ for $G$ mod $H$ such that $$Ml_{X}(T) \leq D(G/H),~ 1 \in T.$$
\end{rem}

\begin{lem} {\rm \cite{Seress:2003}}\label{Schreier}
Let $H \leq G=\langle X \rangle $ and let $T$ be a right transversal for $G$ mod $H$, with $1 \in T.$ Then the set 
$$ \{ t x t_1^{-1} \mid t,t_1  \in T , x \in X, t x t_1^{-1} \in H \}$$ generates $H$.   
\end{lem}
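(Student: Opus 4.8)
The plan is to take an arbitrary element $h \in H$, express it as a word in the generators $X$, and then rewrite that word as a product of Schreier generators by inserting transversal representatives between consecutive letters (a telescoping, or \emph{coset-rewriting}, argument). First I would fix notation: for each $g \in G$ let $\overline{g}$ denote the unique element of $T$ lying in the right coset $Hg$, so that $H\overline{g} = Hg$ and hence $g\,\overline{g}^{-1} \in H$. Since $1 \in T$ and $Hh = H$ whenever $h \in H$, I record the key fact that $\overline{h} = 1$ for $h \in H$.

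Next, using $G = \langle X \rangle$ (and the convention that, $G$ being finite, every element is a product of elements of $X$), I would write $h = x_1 x_2 \cdots x_k$ with each $x_i \in X$. Define $t_0 = 1$ and $t_i = \overline{x_1 x_2 \cdots x_i}$ for $1 \leq i \leq k$, so that every $t_i$ lies in $T$. The central computation is the telescoping identity
$$h = (t_0 x_1 t_1^{-1})(t_1 x_2 t_2^{-1}) \cdots (t_{k-1} x_k t_k^{-1})\, t_k,$$
whose right-hand side collapses, since adjacent factors $t_i^{-1} t_i$ cancel, leaving $x_1 \cdots x_k\, t_k^{-1} t_k = h$.

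It then remains to verify two things. First, each factor $t_{i-1} x_i t_i^{-1}$ lies in $H$: from $H t_{i-1} = H x_1 \cdots x_{i-1}$ we obtain $H t_{i-1} x_i = H x_1 \cdots x_i = H t_i$, so $t_{i-1} x_i$ and $t_i$ share a right coset and their quotient lies in $H$; since $t_{i-1}, t_i \in T$ and $x_i \in X$, this factor is precisely a generator of the form displayed in the statement. Second, the trailing term satisfies $t_k = \overline{x_1 \cdots x_k} = \overline{h} = 1$ by the key fact above. Consequently $h$ equals a product of Schreier generators, and as $h \in H$ was arbitrary, the Schreier generators generate $H$.

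The only delicate point — more a matter of care than a genuine obstacle — is the consistent bookkeeping of left versus right cosets: one must use throughout that $T$ is a \emph{right} transversal, so that $H t_{i-1} x_i = H t_i$ (rather than the left-coset analogue), and that the hypothesis $1 \in T$ is exactly what forces $\overline{h} = 1$ and hence kills the trailing term. Everything else reduces to the mechanical telescoping cancellation.
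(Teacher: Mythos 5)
Your proof is correct, and it is the standard Schreier rewriting argument. The paper itself gives no proof of this lemma---it is quoted from Seress's book \cite{Seress:2003}---and the proof given there is exactly this one: insert transversal representatives $t_i = \overline{x_1\cdots x_i}$ between consecutive letters, observe that each factor $t_{i-1}x_i t_i^{-1}$ lies in $H$ because $Ht_{i-1}x_i = Hx_1\cdots x_i = Ht_i$, and use $1 \in T$ to conclude $t_k = \overline{h} = 1$ so the trailing term disappears. One further point in your favor: writing $h$ as a positive word $x_1\cdots x_k$ with $x_i \in X$ (rather than $x_i \in X \cup X^{-1}$) is exactly the paper's stated convention for generation in finite groups, so your version even yields $H$ as the set of \emph{positive} products of the Schreier generators, which is the form the paper implicitly uses later in Lemma 3.7.
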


Using Schreie's Lemma leads to the following Lemma which is \cite[Lemma 5.1]{Babai&Seress:1992}.
\begin{lem}\label{break-diameter}
If $1 \not = N $, $N \triangleleft G$, then the following inequalities hold:
$$ D^s(G) \leq 2 D^s(G/N) \, D^s(N) + D^s(G/N) +  D^s(N) \leq 4D^s(G/N) \, D^s(N).$$ 
\end{lem}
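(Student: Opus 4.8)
The plan is to fix a generating set $X$ of $G$ realizing the symmetric diameter, i.e. with $\diam^s(G,X)=D^s(G)$, and then to bound $l^s_X(g)$ for an arbitrary $g\in G$ by splitting $g$ across the normal subgroup $N$ and the quotient $G/N$. The engine is Schreier's Lemma (Lemma \ref{Schreier}), which manufactures an explicit generating set of $N$ out of $X$ together with a transversal; the argument then reduces to length-bookkeeping, with the one running caution that every length must be the symmetric length (words in $X\cup X^{-1}$).

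First I would record the symmetric analogue of the Remark. The image $\bar X=\{Nx: x\in X\}$ generates $G/N$, so every coset is a product of at most $D^s(G/N)$ elements of $\bar X\cup\bar X^{-1}$; lifting these words to $G$ yields a right transversal $T$ for $G\bmod N$ with $1\in T$ and $Ml^s_X(T)\le D^s(G/N)$. This is the Remark's argument read verbatim with $\cup X^{-1}$ inserted throughout.

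Next, apply Schreier's Lemma to $T$ and $X$: the set $Y=\{txt_1^{-1}: t,t_1\in T,\ x\in X,\ txt_1^{-1}\in N\}$ generates $N$, and each generator satisfies $l^s_X(txt_1^{-1})\le l^s_X(t)+l^s_X(x)+l^s_X(t_1^{-1})\le 2D^s(G/N)+1$, using that symmetric length is invariant under inversion and that $Ml^s_X(T)\le D^s(G/N)$. The crucial leverage is that $D^s(N)$ is the maximum over \emph{all} generating sets of $N$, so the particular set $Y$ obeys $\diam^s(N,Y)\le D^s(N)$; hence any $n\in N$ is a product of at most $D^s(N)$ elements of $Y\cup Y^{-1}$, giving $l^s_X(n)\le D^s(N)\,(2D^s(G/N)+1)$ after passing from $Y$-length to $X$-length.

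Finally, for arbitrary $g$ write $g=nt$ with $t\in T\cap Ng$ and $n=gt^{-1}\in N$, so that $l^s_X(g)\le l^s_X(n)+l^s_X(t)\le D^s(N)(2D^s(G/N)+1)+D^s(G/N)$, which rearranges to the first claimed bound; maximizing over $g$ and recalling that $X$ realizes $D^s(G)$ puts $D^s(G)$ on the left. The second inequality collapses, after cancelling $2D^s(G/N)D^s(N)$, to $D^s(G/N)+D^s(N)\le 2D^s(G/N)D^s(N)$, which holds since both factors are $\ge 1$ (as $N$ and $G/N$ are nontrivial). I expect the main obstacle to be the Schreier/transversal bookkeeping—pinning down the factor $2D^s(G/N)+1$ and verifying that converting from $Y$-length to $X$-length multiplies (rather than adds) the contributions—since the genuine mathematical content, that the induced generating sets $\bar X$ and $Y$ are automatically controlled, follows purely from $D^s(N)$ and $D^s(G/N)$ being maxima over all generating sets.
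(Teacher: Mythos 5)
Your proof is correct and follows essentially the same route the paper indicates: Schreier generators built from a transversal whose entries have symmetric length at most $D^s(G/N)$, which is exactly the argument of the cited Babai--Seress lemma and of the paper's own non-symmetric analogue (Lemma \ref{diameter-Schreier-2}). One small caveat: the second inequality also requires $N \neq G$, i.e.\ $D^s(G/N) \geq 1$ --- your parenthetical ``$G/N$ nontrivial'' is not granted by the stated hypothesis $N \neq 1$, although it does hold in every application the paper makes.
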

Now we are ready to prove the first main theorem. 
\begin{theorem}\label{solvable-symmetric}
If $G$ is a non-abelian solvable group then 
$$D^s(G^n)\frac{1}{4} (4n )^{l} |G|,$$
where $l$ is the length of the derived series of $G$.
\end{theorem}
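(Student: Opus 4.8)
The plan is to induct on the derived length $l$ of $G$, peeling off one abelian layer of the derived series at a time with Lemma \ref{break-diameter}. The key structural observation is that derived subgroups commute with direct powers: since $(H_1\times H_2)'=H_1'\times H_2'$, induction on the number of factors gives $((G^{(i)})^n)'=(G^{(i+1)})^n$ and hence $(G^n)^{(i)}=(G^{(i)})^n$ for every $i$. Consequently the derived series of $G^n$ is exactly
$$G^n=(G^{(0)})^n \supseteq (G^{(1)})^n \supseteq \cdots \supseteq (G^{(l)})^n=1,$$
each term $(G^{(i+1)})^n$ is the commutator subgroup of $(G^{(i)})^n$ and so is normal and nontrivial whenever $G^{(i+1)}\ne1$, and the successive quotients are
$$(G^{(i)})^n/(G^{(i+1)})^n \cong \bigl(G^{(i)}/G^{(i+1)}\bigr)^n,$$
i.e.\ the $n$-th power of an \emph{abelian} group.

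For the base of the induction I would use the explicit abelian estimate $D^s(A^n)\le n\,|A|$ for every finite abelian group $A$; this is the quantitative form of the $O(n)$ bound recalled in the Introduction, \cite{Karimi:2017}, and it is precisely the statement of the theorem in the case $l=1$. The inductive step is a repeated application of Lemma \ref{break-diameter}. Taking $N=(G^{(1)})^n\ne1$ inside $G^n$ gives
$$D^s(G^n)\le 4\,D^s\bigl((G/G^{(1)})^n\bigr)\,D^s\bigl((G^{(1)})^n\bigr),$$
and applying the same lemma to the normal subgroup $(G^{(i+1)})^n$ of $(G^{(i)})^n$ for $i=0,1,\dots,l-2$ (each such $N$ is nontrivial since $G^{(i+1)}\ne1$ for $i\le l-2$) yields, after these $l-1$ steps,
$$D^s(G^n)\le 4^{\,l-1}\,\Bigl(\prod_{i=0}^{l-2} D^s\bigl((G^{(i)}/G^{(i+1)})^n\bigr)\Bigr)\,D^s\bigl((G^{(l-1)})^n\bigr).$$

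Now every factor on the right is the symmetric diameter of a power of an abelian group — for the last one, $G^{(l-1)}$ is abelian because $(G^{(l-1)})'=G^{(l)}=1$ — so the base estimate bounds each by $n$ times the corresponding order. This contributes $l$ factors of $n$, giving $n^l$, while the product of the orders telescopes:
$$\prod_{i=0}^{l-2}\bigl|G^{(i)}/G^{(i+1)}\bigr|\cdot\bigl|G^{(l-1)}\bigr| = \frac{|G^{(0)}|}{|G^{(l-1)}|}\cdot|G^{(l-1)}|=|G|.$$
Combining these, $D^s(G^n)\le 4^{\,l-1}\,n^l\,|G|=\tfrac14(4n)^l|G|$, as desired.

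The routine points to nail down are the direct-power identity for derived subgroups and the normality/nontriviality conditions needed to invoke Lemma \ref{break-diameter} at each level; these are straightforward. The genuine content, and the step I expect to be the real obstacle, is the abelian base case $D^s(A^n)\le n|A|$: it cannot be recovered from Lemma \ref{break-diameter}, since peeling off a single direct factor via that lemma multiplies diameters and hence grows like $(4D^s(A))^{n}$, exponentially in $n$. It must instead come from the genuinely linear-in-$n$ analysis of powers of abelian groups. Everything else is bookkeeping, with the constants arranged so that the $l-1$ factors of $4$ and the telescoping orders reassemble exactly into $\tfrac14(4n)^l|G|$.
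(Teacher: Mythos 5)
Your proposal is correct and follows essentially the same route as the paper: peel off the derived series of $G^n$ (using $(G^n)^{(i)}=(G^{(i)})^n$), apply Lemma \ref{break-diameter} at each of the $l-1$ levels to accumulate the factor $4^{l-1}$, and bound each abelian quotient power by $n$ times its order via \cite[Theorem 3.2]{Karimi:2017}, so the orders telescope to $|G|$ and the bound becomes $\tfrac14(4n)^l|G|$. The paper does exactly this, also attributing the abelian estimate to the same source, so there is nothing substantive to reconcile.
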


\begin{proof}
Let  
$$\{1\}=G^{(l)} \triangleleft G^{(l-1)}\triangleleft \cdots \triangleleft G'' \triangleleft G'\triangleleft G$$
be the derived series of the group $G$. Since for $1 \leq i \leq l $ we have $$(G^{(i)})^n = (G^n)^{(i)},$$ the series 
$$\{1\}=(G^{(l)})^n \triangleleft (G^{(l-1)})^n \triangleleft \cdots \triangleleft (G'')^n \triangleleft (G')^n \triangleleft G^n$$
is the derived series of the group $G^n$. Using the second inequality in Lemma \ref{break-diameter}, the maximum of the diameter of the group $G^n$ is bounded above by 
\begin{align}\label{solvable1}
4 ^{l-1} \, D^s(G^n / (G')^n) \, D^s( (G')^n/ (G'')^n) \cdots D^s((G^{(l-2)})^n / (G^{(l-1)})^n) \, D^s((G^{(l-1)})^n).
\end{align}
Whereas, for $1 \leq i \leq l-1$ we have
$$
(G^{(i)})^n / (G^{(i+1)})^n  \cong (G^{(i)}/ G^{(i+1)})^n$$ and the factors in a derived series are abelian, by \cite[theorem 3.2]{Karimi:2017} we get 
\begin{equation}\label{solvable2}
D^s(G^{(i)})^n / (G^{(i+1)})^n \leq n \, |G^{(i)}/G^{(i+1)}|= n  \,|G^{(i)}|/|G^{(i+1)}| 
\end{equation}
for $ \,1 \leq i \leq l-1$ and
\begin{equation}\label{solvable3}
D^s((G^{(l)})^n ) \leq n \, |G^{(l)}|.
\end{equation}
Substituting the inequalities \eqref{solvable2} and \eqref{solvable3} in \eqref{solvable1}, we get 
$$D^s(G^n) \frac{1}{4} (4n )^{l} |G|,$$
which is the desired conclusion.
 \end{proof}

In 2006, Babai and Seress has been presented a relation between diameter and symmetric diameter of a finite group (See \cite[Corollary 2.2]{Babai:2006}). We apply this relation with the theorem \ref{solvable-symmetric} to find an upper bound for the diameter of $G^n$, where $G$ is a $p$-group. 
\begin{lem}\label{diameter-symmetric-diameter}
Let $G$ be a finite group and $X$ be a set of generators. The diameter and the symmetric diameter are related as follows:
$$\diam(G,X) \leq 2(\diam^s(G,X)+1)(|X|+1) \ln|G|.$$ 
\end{lem}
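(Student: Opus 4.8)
The plan is to bound the positive length of an arbitrary $g\in G$ by controlling how fast \emph{positive} words (words over $X$ with no inverses) fill up $G$, and to produce the factor $\ln|G|$ from a geometric growth phenomenon rather than from a naive conversion of inverse letters. Write $s=\diam^{s}(G,X)$, $r=|X|$ and $N=|G|$. First I would record why the obvious approach is too weak: since $G$ is finite, each $x\in X$ satisfies $x^{-1}=x^{\ord(x)-1}$, so an inverse letter can be replaced by a positive word of length at most $N-1$; rewriting a symmetric word of length $\le s$ in this way only gives $\diam(G,X)\le s(N-1)$, which is linear in $N$. The whole difficulty is to improve this $N$ to $\ln N$.

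For the main argument I would work with the \emph{directed balls} $P_{t}\subseteq G$, the set of elements expressible by a positive word of length at most $t$, so that $P_{0}=\{1\}$ and $P_{a}P_{b}\subseteq P_{a+b}$. Because a subsemigroup of a finite group is a subgroup, $P_{t}=G$ for large $t$, and $\diam(G,X)$ is the least such $t$. The strategy has two phases. In the \emph{growth phase} I would prove that, as long as $|P_{t}|\le N/2$, each further positive step multiplies the reachable set by a fixed factor; concretely I aim for a growth lemma of the form
\[
|P_{t+1}|\ \ge\ \Bigl(1+\tfrac{1}{(s+1)(r+1)}\Bigr)\,|P_{t}|\qquad\text{whenever }|P_{t}|\le N/2 .
\]
Integrating this, $|P_{t}|$ first exceeds $N/2$ at a step $t_{0}$ of order $(s+1)(r+1)\ln N$. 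In the \emph{finishing phase} I would use that once $|P_{t_0}|>N/2$ the set covers everything after one squaring: for any $c\in G$ the subsets $P_{t_0}$ and $cP_{t_0}^{-1}$ have sizes summing to more than $N$, so they meet at some $a=cb^{-1}$ with $b\in P_{t_0}$, giving $c=ab\in P_{t_0}P_{t_0}\subseteq P_{2t_0}$. Hence $P_{2t_0}=G$ and $\diam(G,X)\le 2t_{0}$, which matches the claimed bound $2(s+1)(r+1)\ln N$ up to the slack in the constants; note that the leading factor $2$ is precisely the cost of this final squaring.

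The heart of the matter — and the step I expect to be the main obstacle — is the growth lemma, i.e. proving \emph{multiplicative} rather than merely \emph{additive} growth of $P_{t}$. Additivity is trivial: if $P_{t}\neq G$ then, since $X$ generates, some $p\in P_{t}$ and $x\in X$ have $px\notin P_{t}$, so $|P_{t+1}|\ge|P_{t}|+1$; but this alone only reproves the linear bound. To gain a constant relative increase I would exploit the symmetric diameter together with an averaging argument over the generators. Since $\diam^{s}=s$, every element of the complement $G\setminus P_{t}$ is joined to $P_{t}$ by a symmetric path of length at most $s$; realising the backward steps of such a path by positive excursions introduces a branching among the $r$ generators together with the option of staying put — this is where the two factors $(s+1)$ and $(r+1)$ are meant to enter — and averaging over these choices should show that a short positive excursion from $P_{t}$ lands on a fixed fraction of the complement while $|P_{t}|\le N/2$. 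Pinning down the exact constant in this fraction, so that it matches the $\tfrac{1}{(s+1)(r+1)}$ above, is the delicate counting that the whole estimate rests on; the slack afforded by the $+1$'s and by the leading factor $2$ is what makes the bookkeeping close.

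Finally I would assemble the two phases into a uniform bound on the positive length of every $g\in G$, reading off $\diam(G,X)\le 2(\diam^{s}(G,X)+1)(|X|+1)\ln|G|$, with $s$ entering through the length of the symmetric excursions, $r$ through the per-step branching, and $\ln N$ through the number of growth steps needed to pass the halfway point.
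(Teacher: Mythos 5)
Your two-phase skeleton (directed balls $P_t$, multiplicative growth while $|P_t|\le N/2$, then the pigeonhole squaring via $P_{t_0}\cap cP_{t_0}^{-1}\neq\emptyset$) is the right architecture, and the finishing phase is correct as written. But there is a genuine gap, and it sits exactly where you flag it: the growth lemma is never proved, and it is the entire content of the statement --- note that the paper itself does not prove this lemma either; its ``proof'' is the citation to \cite[Corollary 2.2]{Babai:2006}, so a blind proof here must stand entirely on its own. Worse, the mechanism you sketch for the growth lemma does not lead anywhere: ``realising the backward steps of such a path by positive excursions'' means replacing a letter $x^{-1}$ by $x^{\ord(x)-1}$, which costs up to $|G|-1$ positive steps --- precisely the linear-in-$|G|$ loss you correctly dismissed in your opening paragraph --- and averaging ``over the $r$ generators together with the option of staying put'' gives no identifiable route to a relative expansion constant of the form $\tfrac{1}{(s+1)(r+1)}$. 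So as written the proposal establishes only the trivial bound plus an unproven claim; and ``matches the claimed bound up to slack in the constants'' is not a proof of the specific inequality asserted in the lemma, since with your growth rate the naive assembly in fact overshoots $2(s+1)(r+1)\ln N$ unless one checks carefully how the $+1$'s and the $\ln 2$ absorb the error coming from $\ln(1+\epsilon)<\epsilon$.

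The gap is closable, but by a different mechanism than the one you hint at. While $|P_t|\le N/2$, averaging over $g\in G$ gives $\frac1N\sum_{g}|gP_t\cap P_t|=|P_t|^2/N\le|P_t|/2$, so some $g$ satisfies $|gP_t\setminus P_t|\ge|P_t|/2$. Write $g=y_1\cdots y_k$ with $k\le s$, $y_i\in X\cup X^{-1}$, and set $W_j=y_1\cdots y_jP_t$. The key point --- the step that defeats the conjugation problem which blocks naive translation arguments of this kind --- is that $|W_j\setminus W_{j-1}|=|y_jP_t\setminus P_t|$, because $W_{j-1}$ and $W_j$ are both left translates, by the same prefix $y_1\cdots y_{j-1}$, of $P_t$ and $y_jP_t$. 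Telescoping then gives some $j$ with $|y_jP_t\setminus P_t|\ge|P_t|/(2s)$, and the reflection identity $|x^{-1}P_t\setminus P_t|=|xP_t\setminus P_t|$ (apply the bijection $z\mapsto xz$ to the intersection) lets you assume $y_j\in X$. Since $XP_t\subseteq P_{t+1}$, this yields $|P_{t+1}|\ge\bigl(1+\tfrac1{2s}\bigr)|P_t|$, with no factor of $r$ needed at all. Running your two phases with this lemma gives $\diam(G,X)\le 2+(4s+2)\ln(N/2)$, and a short verification using $s\ge1$, $r\ge1$ shows this is below the stated $2(s+1)(r+1)\ln N$. With that lemma and that final check supplied, your outline becomes a complete self-contained proof (arguably a cleaner statement than the one you were targeting); without them it is a plan that restates the result rather than proves it.
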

\begin{proof}
See \cite[Corollary 2.2]{Babai:2006}.
\end{proof}
%%%%%%%%%%%%%%%%%%%%%%%%%%%%%%%%%%%%%%%%%%%%%%%%%%%%%%%%%%%%%%%%%%%%%%%%%%%%%%%%%%%%%%%%%%%%%%%%%%%%%%%%%%%%%%%%%%%%%%%%%%%%%%%%%%%%%%%%%%%%%%%%%%%%%55
\begin{theorem}\label{cor:solvable2}
Let $G$ be a solvable group of derived length $l$ and let $A$ be a generating set of $G^n$ of minimum size. Set $\rank(G)=\alpha, ~\rank(G/G')=\beta.$ The following inequality holds, 
 
$$D(G^n,A) \leq 2 (\frac{1}{4} (4n )^{l} |G|+1)(n \beta+1) n \ln |G|,$$
for $n \geq \alpha / \beta.$
In particular, if $G$ a $p$-group, then 
$$D(G^n) \leq 2 (\frac{1}{4} (4n )^{l} |G|+1)(n \beta+1) n \ln |G|,$$
for $n \geq 1.$

\end{theorem}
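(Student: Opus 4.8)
The plan is to combine the symmetric-diameter bound from Theorem \ref{solvable-symmetric} with the conversion lemma, Lemma \ref{diameter-symmetric-diameter}, while keeping careful track of the size of the generating set and the order of the group. First I would observe that Lemma \ref{diameter-symmetric-diameter} is stated with respect to a \emph{fixed} generating set $X$: it says
$$\diam(G^n, A) \leq 2\bigl(\diam^s(G^n, A) + 1\bigr)\bigl(|A| + 1\bigr)\ln|G^n|,$$
so I would apply it with $X = A$, the chosen minimum-size generating set of $G^n$. The three factors on the right then need to be estimated separately: the symmetric-diameter factor, the cardinality factor $|A|+1$, and the logarithmic factor $\ln|G^n|$.

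Next I would bound each factor. For the first factor, since $A$ is a particular generating set, $\diam^s(G^n, A) \leq D^s(G^n)$, and Theorem \ref{solvable-symmetric} gives $D^s(G^n) \leq \tfrac{1}{4}(4n)^l |G|$; this yields the factor $\bigl(\tfrac14(4n)^l|G| + 1\bigr)$. For the cardinality factor I would invoke Wiegold's rank formula, Theorem \ref{rank-solvable-group}: because $A$ has minimum size, $|A| = \rank(G^n) = \beta n$ for $n \geq \alpha/\beta$, so $|A| + 1 = n\beta + 1$. For the logarithmic factor, $\ln|G^n| = \ln(|G|^n) = n\ln|G|$. Substituting these three estimates into the displayed inequality gives exactly
$$D(G^n, A) \leq 2\Bigl(\tfrac{1}{4}(4n)^l|G| + 1\Bigr)(n\beta + 1)\, n \ln|G|$$
for $n \geq \alpha/\beta$, which is the first claimed bound.

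For the ``in particular'' statement I would specialize to the case where $G$ is a $p$-group. A finite $p$-group is nilpotent, and by Corollary \ref{property} nilpotent groups satisfy $\rank(G) = \rank(G/G')$, i.e. $\alpha = \beta$. Hence the constraint $n \geq \alpha/\beta$ becomes simply $n \geq 1$, and the same inequality holds for every $n \geq 1$ with no range restriction. I would close by noting that since $A$ is of minimum size the bound is in fact over the generating set realizing the rank, so writing $D(G^n)$ in place of $D(G^n, A)$ is justified once one checks that the minimum-size generating set is the relevant witness.

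I expect the only genuinely delicate point to be the bookkeeping around $|A|$: one must be sure that Lemma \ref{diameter-symmetric-diameter} is applied to a single generating set (so that $|X|$ is well-defined) and that this set is simultaneously the minimum-size set whose cardinality Wiegold's theorem controls. The threshold $n \geq \alpha/\beta$ enters precisely here, through the hypothesis of Theorem \ref{rank-solvable-group}, and its disappearance in the $p$-group case is exactly the payoff of Corollary \ref{property}. Everything else is direct substitution, so no serious obstacle is anticipated beyond this alignment of the generating set across the two cited results.
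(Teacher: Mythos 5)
Your derivation of the first inequality is correct and is essentially identical to the paper's own proof: apply Lemma \ref{diameter-symmetric-diameter} with $X=A$, bound $\diam^s(G^n,A)\leq D^s(G^n)\leq \tfrac14 (4n)^l|G|$ via Theorem \ref{solvable-symmetric}, use Theorem \ref{rank-solvable-group} to get $|A|=\rank(G^n)=n\beta$ for $n\geq \alpha/\beta$, and compute $\ln|G^n|=n\ln|G|$.

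The gap is in the ``in particular'' statement. Its conclusion concerns $D(G^n)$, the maximum of $\diam(G^n,X)$ over \emph{all} generating sets $X$, whereas the first inequality controls only the chosen minimum-size set $A$. Invoking Corollary \ref{property} to get $\alpha=\beta$ only removes the range restriction $n\geq\alpha/\beta$; it does not explain why an arbitrary generating set obeys the same bound, since the factor $|X|+1$ in Lemma \ref{diameter-symmetric-diameter} can be much larger than $n\beta+1$ when $X$ is large. The missing argument --- and precisely the place where the $p$-group hypothesis enters, beyond nilpotency --- is this: (i) enlarging a generating set never increases the diameter, so $D(G^n)$ is already attained (or at least bounded) by the diameter with respect to some \emph{minimal} generating set, namely a minimal generating subset of any given generating set; and (ii) by Burnside's Basis Theorem, in the $p$-group $G^n$ every minimal generating set has cardinality exactly $\rank(G^n)=n\beta$, i.e.\ is of minimum size, so the first inequality applies to it. The paper cites Burnside's Basis Theorem for exactly this purpose. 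Your closing remark that one must check ``that the minimum-size generating set is the relevant witness'' names the issue but does not resolve it; without Burnside the step genuinely fails, since for nilpotent groups that are not $p$-groups minimal generating sets of different sizes exist (e.g.\ $\{1\}$ and $\{2,3\}$ in the cyclic group $Z_6$), so a minimal generating set need not be of minimum size.
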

\begin{proof}
By Lemma \ref{diameter-symmetric-diameter} we have,
$$\diam(G^n,A) \leq 2 (\diam^s(G^n,A)+1)(|A|+1) n \ln |G|.$$ In addition, $\diam^s(G^n,A) \leq D^s(G^n)$ by definition. Now by using theorem \ref{solvable-symmetric} and Theorem \ref{rank-solvable-group} we get the desired conclusion. The second statement follows from these two facts: First, if $G$ is a $p$-group then every minimal generating set is a generating set of minimum size, which follows from the Burnside's Basis Theorem \cite{Hall:1976}. Second, by Corollary \ref{property}, if $G$ is a nilpotent group (note that every $p$-group is nilpotent) then $\rank(G)= \rank(G/G')$. 
\end{proof}

Now  we prove  a non symmetric version of  {\it Shereier Lemma} ( Lemma \ref{break-diameter}). This Lemma is essencial in the proof of our main theorem.   
\begin{lem}\label{diameter-Schreier-2}
Let $G$ be a finite group with a generating set $X$ and a normal subgroup $H$. Let $T$ be a right transversal of $G/H$ such that $$ Ml_{X}(T) \leq D(G/H),~ 1 \in T.$$ The following inequality holds:
$$\diam(G,X) \leq D(G/H) + (D(G/H) +1 + Ml_{X}(\{ t^{-1} \mid t \in T \})) D(H).$$ Furthermore, we have
\begin{align*}
D(G^n) \leq D(G^n/H^n)+ (1 + |G| D(G^n /H^n)) D(H^n).
\end{align*}
\end{lem}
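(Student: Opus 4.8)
The plan is to establish the two inequalities separately: the first by a direct application of Schreier's Lemma (Lemma \ref{Schreier}) together with the transversal hypothesis, and the second by specializing the first to the pair $(G^n,H^n)$ and then controlling the one quantity the first inequality leaves implicit, namely the lengths of the inverses of the transversal elements.

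For the first inequality I would fix an arbitrary $g \in G$ and let $t \in T$ be the representative of the coset $Hg$, so that $h := gt^{-1} \in H$ and $g = ht$. By hypothesis $l_X(t) \leq Ml_X(T) \leq D(G/H)$. By Schreier's Lemma the set $S = \{t x t_1^{-1} \mid t,t_1 \in T,\ x \in X,\ t x t_1^{-1} \in H\}$ generates $H$, so $\diam(H,S) \leq D(H)$ and $h$ is a product of at most $D(H)$ Schreier generators. Each such generator satisfies
\[
l_X(t x t_1^{-1}) \leq l_X(t) + 1 + l_X(t_1^{-1}) \leq D(G/H) + 1 + Ml_X(\{t^{-1} \mid t \in T\}),
\]
whence $l_X(h) \leq D(H)\bigl(D(G/H) + 1 + Ml_X(\{t^{-1} \mid t \in T\})\bigr)$. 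Adding $l_X(t) \leq D(G/H)$ and taking the maximum over $g$ yields the first displayed inequality.

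For the second inequality I would apply the first one to the normal subgroup $H^n \triangleleft G^n$. Since $G^n/H^n \cong (G/H)^n$, the Remark preceding Lemma \ref{Schreier} furnishes, for any generating set $X$ of $G^n$, a right transversal $T$ of $G^n$ mod $H^n$ with $1 \in T$ and $Ml_X(T) \leq D(G^n/H^n)$; the first inequality then bounds $\diam(G^n,X)$ by $D(G^n/H^n) + \bigl(D(G^n/H^n) + 1 + Ml_X(\{t^{-1} \mid t \in T\})\bigr)\,D(H^n)$.

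The crux, and the step I expect to be the main obstacle, is bounding $Ml_X(\{t^{-1} \mid t \in T\})$, since a priori an inverse may be far longer than the element itself. The idea is to replace an inverse by a positive power: $t^{-1} = t^{\ord(t)-1}$, so $l_X(t^{-1}) \leq (\ord(t)-1)\,l_X(t) \leq (\ord(t)-1)\,D(G^n/H^n)$. The naive Lagrange bound $\ord(t) \leq |G^n| = |G|^n$ is far too weak; instead I would use that the order of every element of $G^n$ divides $\exp(G^n)$, that $\exp(G^n) = \exp(G)$ because $G^n$ is a direct power, and that $\exp(G)$ divides $|G|$ (for each prime $p$ the $p$-part of $\exp(G)$ equals the largest $p$-part occurring among element orders, hence is at most the $p$-part of $|G|$). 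Thus $\ord(t) \leq |G|$, giving $Ml_X(\{t^{-1} \mid t \in T\}) \leq (|G|-1)\,D(G^n/H^n)$. Substituting this, simplifying via $D(G^n/H^n) + 1 + (|G|-1)D(G^n/H^n) = 1 + |G|\,D(G^n/H^n)$, and taking the maximum over all generating sets $X$ of $G^n$ yields the claimed inequality.
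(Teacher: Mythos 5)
Your proposal is correct and follows essentially the same route as the paper: the coset decomposition $g=ht$ plus Schreier generators for the first inequality, and for the second, specializing to the pair $(G^n,H^n)$ and bounding inverses via $t^{-1}=t^{\ord(t)-1}$ together with $\ord(t)\leq |G|$. The only difference is that you supply a justification (via the exponent of a direct power) for the fact that every element of $G^n$ has order at most $|G|$, a point the paper simply asserts.
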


\begin{proof}
Given $g \in G$, we have $ g = ht$ for some $h \in H$  and $ t \in T$. Hence $$ l_{X}(g ) \leq l_{X} (t)+ l_{X}(h).$$ Since $ Ml_{X}(T) \leq D(G/H)$, then $ l_{X}(g ) \leq D(G/H)+ l_{X}(h)$. Using Lemma \ref{Schreier} we get $l_{X}(h) \leq (D(G/H) +1 + Ml_{X}(\{ t^{-1} \mid t \in T \}) ) D(H)$. Combining these two facts gives the upper bound in the first inequality. Now we prove the second statement. Let $X'$ be a generating set of $G^n$ and let $T'$ be a right transversal of $G^n/H^n$ such that $$ Ml_{X'}(T') \leq D(G^n/H^n).$$ Proceeding as above for the case $n=1$, it suffices to show that $$Ml_{X'}(\{ t^{-1} \mid t \in T'\})\leq (|G|-1) D(G^n /H^n).$$ For given $t \in T'$ we have $$l_{X'}(t) \leq D(G^n/H^n).$$ Since $$ t^{-1}=t^{o(t)-1},$$ then we obtain $$l_{X'}(t^{-1}) \leq (o(t)-1)l_{X'}(t).$$ Hence, we have $$l_{X'}(t^{-1}) \leq (|G|-1)D(G^n/H^n),$$ since $$o(g) \leq |G|,$$ for every element $g \in G^n$. The proof is complete.      
\end{proof}

%First we need to prove the following elementary observation. 

%\begin{lem}\label{lem:commutator}
 %We have $(G^n)' = (G')^n$ for every finite group $G$ and $n \geq 1$.
%\end{lem}
%\begin{proof}
%Denote by $[G,G]=\{ghg^{-1}h^{-1} : g,h \in G \}$. Suppose that $G_1$ and $G_2$ are two finite groups. Since $[G_1,G_1] \times [G_2,G_2] =[G_1\times G_2,G_1 \times G_2]$ and the both sets $[G_1,G_1]$ and $[G_2,G_2]$ have the identity element, then by definition 
%\begin{equation}\label{commutator}
%(G_1 \times G_2)'=G_1' \times G_2'.
%\end{equation} 
%Now it is easy to prove the statement of the lemma by induction on $n$. For $n=1$ it is trivial. Using induction hypothesis and the equality \eqref{commutator} leads to $(G^n)' = (G^{n-1} \times G)'=(G^{n-1})' \times G'=( G')^{n-1} \times G'=(G')^n$ which is the desired conclusion. 
%\end{proof}
Now we are ready to prove our main theorem. 
%The following theorem is straightforward by using Lemma \ref{diameter-Schreier-2}.
\begin{theorem}\label{cor:solvable1}
Let $G$ be a non-abelian solvable group. Let 
$$\{1\}=G^{(l)} \triangleleft G^{(l-1)} \triangleleft \ldots \triangleleft G''\triangleleft  G' \triangleleft G$$ be the derived series of $G$.   
For $n\geq 2$, the following inequality holds:
$$D(G^n) \leq n^{l} |G| \prod_{i=0}^{l-2}( |G^{(i)}| +1).$$ 
\end{theorem}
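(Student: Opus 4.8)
The plan is to induct on the derived length $l$ of $G$, using the recursion supplied by Lemma~\ref{diameter-Schreier-2} to peel off the top factor $G/G'$ of the derived series at each step. Concretely, I would prove by induction on $l \geq 1$ that every finite solvable group $G$ of derived length $l$ satisfies $D(G^n) \leq n^{l}|G|\prod_{i=0}^{l-2}(|G^{(i)}|+1)$ for all $n \geq 2$; the stated theorem is then the case $l \geq 2$ (i.e.\ $G$ non-abelian), while the case $l = 1$ serves as the base of the induction. Throughout I would record the identity $(G^n)^{(i)} = (G^{(i)})^n$ already used in Theorem~\ref{solvable-symmetric}, so that the quotient appearing in Lemma~\ref{diameter-Schreier-2} is genuinely a direct power of $G/G'$.

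For the base case $l = 1$ the group $G$ is abelian, the empty product equals $1$, and the claimed bound reads $D(G^n) \leq n|G|$, which is the abelian estimate \cite[Theorem 3.2]{Karimi:2017}. For the inductive step I would apply the second inequality of Lemma~\ref{diameter-Schreier-2} with $H = G'$ (a proper nontrivial normal subgroup, since $G$ is non-abelian solvable), obtaining
$$D(G^n) \leq D\big((G/G')^n\big) + \big(1 + |G|\, D((G/G')^n)\big)\, D\big((G')^n\big),$$
where I have used $G^n/(G')^n \cong (G/G')^n$. The quotient $G/G'$ is abelian, so the base case gives $D((G/G')^n) \leq n|G|/|G'|$. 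Since $G'$ is solvable of derived length $l-1$ with $(G')^{(i)} = G^{(i+1)}$, the induction hypothesis yields $D((G')^n) \leq n^{l-1}|G'|\prod_{i=1}^{l-2}(|G^{(i)}|+1)$.

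It then remains to combine these two estimates and check the arithmetic. Writing $Q := n^{l-1}|G'|\prod_{i=1}^{l-2}(|G^{(i)}|+1)$ and $a := D((G/G')^n) \leq n|G|/|G'|$, the bound above becomes $D(G^n) \leq a + Q + |G|\,a\,Q$. The leading term satisfies $|G|\,a\,Q \leq |G|^{2} n^{l}\prod_{i=1}^{l-2}(|G^{(i)}|+1)$, which is exactly the dominant part $n^{l}|G|^{2}\prod_{i=1}^{l-2}(|G^{(i)}|+1)$ of the target $n^{l}|G|(|G|+1)\prod_{i=1}^{l-2}(|G^{(i)}|+1)$. The remaining task is to absorb $a + Q$ into the residual slack $n^{l}|G|\prod_{i=1}^{l-2}(|G^{(i)}|+1)$; using $|G'| \leq |G|$ and $\prod_{i=1}^{l-2}(|G^{(i)}|+1) \geq 1$ one checks $a + Q \leq 2\, n^{l-1}|G|\prod_{i=1}^{l-2}(|G^{(i)}|+1)$, and this is $\leq n^{l}|G|\prod_{i=1}^{l-2}(|G^{(i)}|+1)$ precisely because $n \geq 2$. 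This closes the induction.

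I expect the main obstacle to be exactly this final bookkeeping step: the recursion produces, besides the intended leading term $|G|\,a\,Q$, two lower-order contributions $a$ and $Q$ whose absorption is what forces the hypothesis $n \geq 2$ and dictates the precise shape of the product $\prod_{i=0}^{l-2}(|G^{(i)}|+1)$ in the statement. A secondary point to be careful about is that the top factor here requires the \emph{non-symmetric} abelian diameter bound $D((G/G')^n) \leq n|G|/|G'|$, in contrast with the symmetric estimate invoked in Theorem~\ref{solvable-symmetric}; I would verify that \cite[Theorem 3.2]{Karimi:2017} indeed supplies this non-symmetric version before completing the argument.
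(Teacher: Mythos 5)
Your proposal is correct and takes essentially the same route as the paper: both peel the derived series using Lemma~\ref{diameter-Schreier-2} with $H=G'$, identify $G^n/(G')^n\cong (G/G')^n$, and invoke the non-symmetric abelian bound $D(A^n)\le n|A|$ from \cite[Theorem 3.2]{Karimi:2017}, your induction on derived length being just the unrolled form of the paper's iterated application. The only difference is the bookkeeping of the lower-order terms: you absorb $a+Q$ using $n\ge 2$ (via $2n^{l-1}\le n^l$), whereas the paper absorbs them multiplicatively through $D(G^n/(G')^n)+D((G')^n)\le D(G^n/(G')^n)\,D((G')^n)$, justified by both diameters exceeding $1$ --- a claim that itself silently requires $n\ge 2$ (for $n=1$ and $G'\cong Z_2$ one has $D(G')=1$), so your variant is, if anything, slightly cleaner on this point.
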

 \begin{proof}
Since $(G^k)' = (G')^k$ for $k \geq 1$, then the derived series of $G^n$ is  
\begin{equation}\label{derived series}
\{1\}=(G^{(l)})^n \triangleleft (G^{(l-1)})^n \triangleleft \ldots \triangleleft (G'')^n \triangleleft  (G')^n\triangleleft G^n.
\end{equation}
 Applying Lemma \ref{diameter-Schreier-2} to the group $G^n$ with the subgroup $(G')^n$ gives 
\begin{align*} 
D(G^n) &\leq  D(G^n/ (G')^n) + (1 +|G|D(G^n/ (G') ^n)  D((G')^n)\\
&=D(G^n/ (G')^n)+D((G')^n)+|G|D(G^n/ (G') ^n)  D((G')^n)\\
& \leq D(G^n/ (G')^n)D((G')^n)+|G|D(G^n/ (G') ^n)  D((G')^n)\\
 & = D(G^n/ (G') ^n)  D((G')^n) (1 +|G|), 
\end{align*}
the second inequality follows from the fact that $D(G^n/ (G')^n),D((G')^n) > 1$ and this is because the quotient group $G/G'$ and the commutator subgroup $G'$ are not trivial.
By repeating  the process for the other subgroups in the series \eqref{derived series} we have 
\begin{equation}\label{diameter-G^n1}
D(G^n) \leq D(G^n/ (G') ^n)D((G')^n/ (G'') ^n) \ldots D((G^{(l-1)})^n) \prod_{i=0}^{l-2}( |G^{(i)}| +1).
\end{equation}l
Since for every group $G$ with a normal subgroup $H$ we have $G^n/H^n \cong (G/H)^n$, then 
\begin{equation}\label{diameter-G^n2}
D(G^n) \leq D((G/G') ^n)D((G'/G'') ^n) \ldots D((G^{(l-1)})^n) \prod_{i=0}^{l-2}( |G^{(i)}| +1).
\end{equation}
 All the quotient groups in the inequality \eqref{diameter-G^n2}  and the group $G^{(l-1)}$ are abelian.  On the other hand, for any abelian group $A$  we have  $$D(A^n) \leq n (|A| -\rank( A)) \leq n |A|$$ (see \cite[theorem 3.2 ]{Karimi:2017} ). Then we get
\begin{align*}
D(G^n)& \leq n^{l} D(G/ G')D(G'/ G'') \cdots\\
& D(G^{(l-2)} / G^{(l-1)}) D((G^{(l-1)}) \prod_{i=0}^{l-2}( |G^{(i)}| +1)\\
& \leq n^{l} |G/ G'| |G'/ G''| \cdots |G^{(l-2)} / G^{(l-1)}| |G^{(l-1)}|\prod_{i=0}^{l-2}( |G^{(i)}| +1)\\
& = n^{l} |G|\prod_{i=0}^{l-2}( |G^{(i)}| +1).\qedhere
\end{align*}
\end{proof}

 Note that the upper bound presented in theorem \ref{cor:solvable2} is just satisfied for $p$-groups. While, the upper bound presented in theorem \ref{cor:solvable1} not only is better,  but also is satisfied for all  non-abelian solvable groups.

  As an example of a non-abelian solvable group which is also a $2$-group we justify the upper bounds in theorems \ref{cor:solvable2} and \ref{cor:solvable1} for \emph{quaternion group} $Q_8$.
	Let $Q_8=\{\pm 1, \pm i, \pm j, \pm k \}$ be the quaternion group in which $$i^2=j^2=k^2=-1$$ and $$ij=k, jk=i, ki=j, ji=-k, kj=-i,ik=-j.$$
We have $Q_8' \cong Z_2$ and $Q_8 /Q_8' \cong Z_2 \times Z_2$. The length of the derived series of $Q_8$ is $2$. Hence, $l=2$ and $\beta = \rank(Z_2 \times Z_2)=2$ in the notations of theorems  \ref{cor:solvable2} and \ref{cor:solvable1}. Therefore by theorem \ref{cor:solvable2} we have 
$$D(Q_8^n) \leq 2n (32n^2+1)(2n+1)ln (8),$$ and by theorem \ref{cor:solvable1} we have   
$$D(Q_8^n) \leq 72n^2 .$$  

We now present a better upper bound for the diameter of the direct power of the quaternion group $Q_8$ by using Lemma \ref{diameter-Schreier-2} directly.  
\begin{ex}
For $n\geq1$ we have $D(Q_8^n) \leq 8n^2+3n$.
\end{ex}
\begin{proof} 
Consider the normal subgroup $H=\{1,-1\}$. Let $X$ be a generating set of $Q_8^n$. We have $H^n \triangleleft Q_8^n$. Let $T$ be a right transversal of $Q_8^n$ mod $H^n$ such that
$$1 \in T , Ml_X(T \setminus \{1\}) \leq D(Q_8^n /H^n).$$  
Using Lemma \ref{diameter-Schreier-2} we have
$$\diam(Q_8^n,X) \leq D(Q_8^n/H^n) + (D(Q_8^n/H^n) +1 + Ml_{X}(\{ t^{-1} \mid t \in T \} )) D(H^n).$$ 
On the other hand, since  $H \cong Z_2$ , $Q_8/H \cong Z_2 \times Z_2$, we have
\begin{equation}\label{quaternion}
\diam(Q_8^n,X) \leq 2n + (2n +1 + Ml_{X}(\{ t^{-1} \mid t \in T \})n.
\end{equation}
Since for every $g \in Q_8^n,~ g^4=1$, for every $t \in T,~ t^{-1}= t^3$. Hence, the following inequality holds:
$$l_{X}(t^{-1}) \leq 3 l_{X}(t) \leq 3  D(Q_8^n /H^n) \leq 6n.$$ 
Substituting $6n$ for $Ml_{X}(\{ t^{-1} | t \in T \}$ in \eqref{quaternion} we get
$$D(Q_8^n) \leq 8n^2+3n.\qedhere $$
\end{proof}

%\section{Conclusion}
  
\section{Acknowledgments}
 The second author wishes to thank the University of Arak, for the invitation and hospitality. And the International Science and Technology Interactions (ISTI) for financial support.

\bibliographystyle{amsplain}
\bibliography{nasimref}

\end{document}